\title{Fixed Points for Banach and Kannan Contractions in Modular Spaces with a Graph\\[0.3cm]}
\author{{Aris Aghanians$^1$\,\,\,and\,\,\,Kourosh Nourouzi$^2$\thanks{Corresponding
author} \thanks{e-mail: nourouzi@kntu.ac.ir; fax: +98 21
22853650}}\\[0.4cm]
{\em $^{1,2}$Department of Mathematics, K. N. Toosi University of Technology,}\\
{\em P.O. Box 16315-1618, Tehran, Iran}}
\theoremstyle{definition}
\newtheorem{defn}{Definition}
\newtheorem{exm}{Example}
\theoremstyle{remark}
\newtheorem{rem}{Remark}
\theoremstyle{plain}
\newtheorem{cor}{Corollary}
\newtheorem{thm}{Theorem}
\newtheorem{prop}{Proposition}
\DeclareMathOperator{\fix}{Fix}
\begin{document}
\maketitle
\begin{abstract}
In this paper, we discuss the existence and uniqueness of fixed
points for Banach and Kannan $\widetilde G$-$\rho$-contractions defined on modular
spaces endowed with a graph without using the $\Delta_2$-condition or the Fatou property. 
\end{abstract}
\def\thefootnote{ \ }
\footnotetext{{\em}$2010$ Mathematics Subject Classification.
47H10, 46A80, 05C40.\par {\bf Keywords:} Complete modular space; Fixed point; Banach contraction;
Kannan contraction.}

\section{Introduction and Preliminaries}
To control the pathological behavior of a modular in modular spaces some  conditions such as
the $\Delta_2$-condition and the Fatou property are usually assumed (see, e.g., \cite{ait,nou091,kha, nou12,nou092,raz}.
For instance, in \cite{ait}, Banach's fixed point theorem is given in modular
spaces having both the $\Delta_2$-condition and the Fatou
property. Khamsi \cite{kha}, also presented some fixed point
theorems for quasi-contractions in modular spaces satisfying only the Fatou property.\par
In 2008 Jachymski \cite{jac} established Banach fixed point theorem in metric spaces with a graph and his idea followed by the authors in uniform spaces (see, e.g., \cite{knpan,knfpt}).\par
In this paper motivated by the ideas given in \cite{ait,jac}, we aim to discuss the fixed points of Banach and Kannan
contractions in modular spaces endowed with a graph without $\Delta_2$-condition and  Fatou property. We also clarify the independence of these contractions.
\par
We first commence some basic concepts about modular
spaces as formulated by Musielak and Orlicz \cite{mus}. For more
details, the reader is referred to \cite{mus0}.

\begin{defn}
A real-valued function $\rho$ defined on a real vector space $X$
is called a modular on $X$ if it satisfies the following conditions:
\begin{enumerate}[label={\bf M\arabic*)}]
\item $\rho(x)\geq0$ for all $x\in X$;
\item $\rho(x)=0$ if and only if $x=0$;
\item $\rho(x)=\rho(-x)$ for all $x\in X$;
\item $\rho(ax+by)\leq\rho(x)+\rho(y)$ for all $x,y\in X$ and
all $a,b\geq0$ with $a+b=1$.
\end{enumerate}
If $\rho$ satisfies (M1)-(M4), then the pair $(X,\rho)$, shortly denoted by $X$, is called a modular space.
\end{defn}

The modular $\rho$ is called convex if Condition (M4) is strengthened by replacing with
\begin{enumerate}[label={\bf M\arabic*$\boldsymbol{'}$)},leftmargin=9.5mm]
\setcounter{enumi}{3}
\item $\rho(ax+by)\leq a\rho(x)+b\rho(y)$ for all $x,y\in X$ and all $a,b\geq0$ with $a+b=1$.
\end{enumerate}\par
It is easy to obtain the following two immediate consequences of Condition (M4) which we need in the sequel:
\begin{itemize}
\item If $a$ and $b$ are real numbers with $|a|\leq|b|$, then $\rho(ax)\leq\rho(bx)$ for all $x\in X$;
\item If $a_1,\ldots,a_n$ are nonnegative numbers with $\sum_{i=1}^na_i=1$, then
$$\rho\Big(\sum_{i=1}^na_ix_i\Big)\leq\sum_{i=1}^n\rho(x_i)\qquad(x_1,\ldots,x_n\in X).$$
\end{itemize}

\begin{defn}\label{space}
Let $(X,\rho)$ be a modular space.
\begin{enumerate}
\item A sequence $\{x_n\}$ in $X$ is said to be
$\rho$-convergent to a point $x\in X$, denoted by
$x_n\stackrel{\rho}\longrightarrow x$, if $\rho(x_n-x)\rightarrow 0$
as $n\rightarrow\infty$. \item A sequence $\{x_n\}$ in $X$
is said to be $\rho$-Cauchy if $\rho(x_m-x_n)\rightarrow0$ as
$m,n\rightarrow\infty$. \item The modular space $X$ is called
$\rho$-complete if each $\rho$-Cauchy sequence in $X$ is
$\rho$-convergent to a point of $X$.
\item The modular $\rho$ is
said to satisfy the $\Delta_2$-condition if $2x_n\stackrel{\rho}\longrightarrow0$ as $n\rightarrow\infty$ whenever $x_n\stackrel{\rho}\longrightarrow0$ as $n\rightarrow\infty$.
\item The modular $\rho$ is said to have the Fatou property if
$$\rho(x-y)\leq\liminf\limits_{n\rightarrow\infty}\rho(x_n-y_n)$$
whenever
$$x_n\stackrel{\rho}\longrightarrow x\quad\text{and}\quad
y_n\stackrel{\rho}\longrightarrow y\quad\text{as}\quad n\rightarrow\infty.$$
\end{enumerate}
\end{defn}

Conditions (M2) and (M4) ensure that each sequence in a modular space can be $\rho$-convergent to at most one point. In other words, the limit of a $\rho$-convergent sequence in a modular space is unique.\par
We next review some notions in graph theory. All of them can be found in, e.g., \cite{bon}.\par
Let $X$ be a modular space. Consider a directed graph $G$ with $V(G)=X$ and $E(G)\supseteq\{(x,x):x\in X\}$, i.e., $E(G)$ contains all loops. Suppose further that $G$ has no parallel edges. With these assumptions, we may denote $G$ by the pair $(V(G),E(G))$. In this way, the modular space $X$ is endowed with the graph $G$. The notation $\widetilde G$ is used to denote the undirected graph obtained from $G$ by deleting the directions of the edges of $G$. Thus,
$$V(G)=X\quad\text\quad E(G)=\big\{(x,y)\in X\times X:(x,y)\in E(G)\ \vee\ (y,x)\in E(G)\big\}.$$\par
By a path in $G$ from a vertex $x$ to a vertex $y$, it is meant a finite sequence $(x_s)_{s=0}^N$ of vertices of $G$ such that $x_0=x$, $x_N=y$, and $(x_{s-1},x_s)\in E(G)$ for $s=1,\ldots,N$. A graph $G$ is called weakly connected if there exists a path in $\widetilde G$ between each two vertices of $G$, i.e., there exists an undirected path in $G$ between its each two vertices.

\section{Main Results}
Let $X$ be a modular space endowed with a graph $G$ and $f:X\to X$ be any mapping. The set of all fixed points for $f$ is denoted by $\fix(f)$, and by $C_f$, we mean the set of all elements $x$ of $X$ such that $(f^nx,f^mx)\in E(\widetilde G)$ for $m,n=0,1,\ldots\,$.\par
We begin with introducing Banach and Kannan $G$-$\rho$-contractions. 

\begin{defn}\label{bankan}
Let $X$ be a modular space with a graph $G$ and $f:X\to X$ be a mapping. We call $f$ a Banach $G$-$\rho$-contraction if
\begin{enumerate}[label={\bf B\arabic*)}]
\item $f$ preserves the edges of $G$, i.e., $(x,y)\in E(G)$ implies $(fx,fy)\in E(G)$ for all $x,y\in X$;
\item there exist positive numbers $k$, $a$ and $b$ with $k<1$ and $a<b$ such that
$$\rho\big(b(fx-fy)\big)\leq k\rho\big(a(x-y)\big)$$
for all $x,y\in X$ with $(x,y)\in E(G)$.
\end{enumerate}
The numbers $k$, $a$ and $b$ are called the constants of $f$. And we call $f$ a Kannan $G$-$\rho$-contraction (see \cite{kan, rho2}) if
\begin{enumerate}[label={\bf K\arabic*)}]
\item $f$ preserves the edges of $G$;
\item there exist positive numbers $k,l,a_1,a_2$ and $b$  with $k+l<1$, $a_1\leq\frac{b}{2}$ and
$a_2\leq b$ such that
$$\rho\big(b(fx-fy)\big)\leq k\rho\big(a_1(fx-x)\big)+l\rho\big(a_2(fy-y)\big)$$
for all $x,y\in X$ with $(x,y)\in E(G)$.
\end{enumerate}
The numbers $k$, $l$, $a_1$, $a_2$ and $b$ are called the constants of $f$.
\end{defn}

It might be valuable if we discuss these contractions a little. Our first proposition follows immediately from Condition (M3) and Definition \ref{bankan}.

\begin{prop}
Let $X$ be a modular space with a graph $G$. If a mapping from $X$ into itself satisfies {\rm(B1)}
(respectively, {\rm(B2)}) for $G$, then it satisfies
{\rm(B1)} (respectively, {\rm(B2)}) for $\widetilde G$.
In particular, a Banach $G$-$\rho$-contraction is also a
Banach $\widetilde G$-$\rho$-contraction. Similar statements are true for Kannan $G$-$\rho$-contractions provided that $a_2\leq\frac b2$.
\end{prop}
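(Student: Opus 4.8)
The plan is to verify each defining condition for the undirected graph $\widetilde G$ by reducing, edge by edge, to the corresponding condition for $G$. The only structural facts I need are that $E(G)\subseteq E(\widetilde G)$ and that $\widetilde G$ is symmetric, so that every edge $(x,y)\in E(\widetilde G)$ satisfies $(x,y)\in E(G)$ or $(y,x)\in E(G)$; on the analytic side I will use the symmetry axiom (M3) and the monotonicity consequence of (M4) noted earlier ($|a|\le|b|$ implies $\rho(ax)\le\rho(bx)$). Throughout I split into the two possible orientations of a given edge.

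For the edge-preserving conditions (B1) and (K1), fix $(x,y)\in E(\widetilde G)$. If $(x,y)\in E(G)$, then (B1) for $G$ gives $(fx,fy)\in E(G)\subseteq E(\widetilde G)$; if instead $(y,x)\in E(G)$, then (B1) for $G$ gives $(fy,fx)\in E(G)\subseteq E(\widetilde G)$, and symmetry of $\widetilde G$ yields $(fx,fy)\in E(\widetilde G)$. Hence $f$ preserves the edges of $\widetilde G$. This step uses nothing about the modular and imposes no constraint on the constants, so (K1) transfers in exactly the same way.

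For (B2), again fix $(x,y)\in E(\widetilde G)$. The case $(x,y)\in E(G)$ is immediate. If $(y,x)\in E(G)$, I apply (B2) to the pair $(y,x)$ to obtain $\rho\big(b(fy-fx)\big)\le k\rho\big(a(y-x)\big)$, and then rewrite both sides using (M3): $\rho\big(b(fy-fx)\big)=\rho\big(b(fx-fy)\big)$ and $\rho\big(a(y-x)\big)=\rho\big(a(x-y)\big)$. This reproduces (B2) for $\widetilde G$ verbatim, with the same constants, so the full Banach statement follows.

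The Kannan case (K2) runs along the same lines but is where the real work lies. For $(x,y)\in E(\widetilde G)$ with $(y,x)\in E(G)$, applying (K2) to $(y,x)$ and invoking (M3) gives $\rho\big(b(fx-fy)\big)\le l\rho\big(a_2(fx-x)\big)+k\rho\big(a_1(fy-y)\big)$; that is, the bound arrives with the coefficients $k,l$ and the scales $a_1,a_2$ attached to the \emph{opposite} difference terms from those in (K2). I expect this coefficient-and-scale swap to be the main obstacle, since, unlike the Banach case, (M3) alone does not restore the original form. The remedy is to pass to a common scale: with $a_2\le\frac b2$ now in force alongside $a_1\le\frac b2$, the monotonicity consequence of (M4) lets me dominate both $\rho\big(a_1(\cdot)\big)$ and $\rho\big(a_2(\cdot)\big)$ by $\rho\big(\frac b2(\cdot)\big)$, which makes the two scale factors interchangeable and absorbs the two orientations into one Kannan estimate on $\widetilde G$ whose constants are symmetric in the two difference terms. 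The delicate point I anticipate is checking that the resulting constants still satisfy the admissibility requirements, and it is precisely here that the hypothesis $a_2\le\frac b2$ is needed; the edge-preservation part (K1), by contrast, transfers exactly as (B1) did and needs no such restriction.
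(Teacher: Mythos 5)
Your handling of (B1), (K1), and (B2) is correct and is exactly what the paper has in mind when it says the proposition ``follows immediately from Condition (M3) and Definition 3'': split according to which orientation of a given edge of $\widetilde G$ lies in $E(G)$, use the symmetry of $E(\widetilde G)$ for the edge-preservation conditions, and for (B2) note that both sides of the contractive inequality are invariant under interchanging $x$ and $y$ by (M3), so the very same constants $k,a,b$ serve for $\widetilde G$.

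The Kannan half of your proof has a genuine gap, located exactly at the step you flagged as delicate. For a reversed edge the swap gives $\rho\big(b(fx-fy)\big)\leq l\rho\big(a_2(fx-x)\big)+k\rho\big(a_1(fy-y)\big)$. Passing to the common scale $\frac{b}{2}$ (legitimate by monotonicity once $a_1,a_2\leq\frac{b}{2}$) equalizes the scales but does nothing to the coefficients: one family of edges carries the pattern $(k,l)$ and the other the pattern $(l,k)$. Since $\rho\big(\tfrac{b}{2}(fx-x)\big)$ and $\rho\big(\tfrac{b}{2}(fy-y)\big)$ are unrelated nonnegative quantities, a single estimate symmetric in the two difference terms must have both coefficients at least $\max\{k,l\}$, so their sum is at least $2\max\{k,l\}$, which violates the admissibility requirement $k'+l'<1$ whenever $\max\{k,l\}\geq\frac{1}{2}$; the hypothesis $a_2\leq\frac{b}{2}$ constrains scales, not coefficients, and cannot repair this. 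Indeed, no single admissible constant set can exist in general: on $(\Bbb{R},\rho)$ with $\rho(x)=|x|$, let $fx=0$ for $x\neq-\frac{3}{5}$, $f(-\frac{3}{5})=\frac{2}{5}$, and let $E(G)$ consist of all loops together with $(-\frac{3}{5},0)$ and $(\frac{2}{5},0)$. Then (K1) holds, and (K2) holds for $G$ with $k=\frac{4}{5}$, $l=\frac{1}{10}$, $a_1=a_2=\frac{1}{2}$, $b=1$ (with equality on the edge $(-\frac{3}{5},0)$), yet for $\widetilde G$ the edge $(-\frac{3}{5},0)$ forces $k'a_1'\geq\frac{2}{5}b'$, hence $k'\geq\frac{4}{5}$ (as $a_1'\leq\frac{b'}{2}$), while the reversed edge $(0,-\frac{3}{5})$ forces $l'a_2'\geq\frac{2}{5}b'$, hence $l'\geq\frac{2}{5}$ (as $a_2'\leq b'$), giving $k'+l'\geq\frac{6}{5}>1$. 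What the proposition, read together with the remark that follows it in the paper, actually asserts is the per-edge statement: every edge of $\widetilde G$ satisfies a Kannan inequality with one of the two constant sets $(k,l,a_1,a_2,b)$ or $(l,k,a_2,a_1,b)$, and the proviso $a_2\leq\frac{b}{2}$ is precisely what makes the second set admissible. Your attempt to upgrade this to one uniform set of constants valid on all of $E(\widetilde G)$ is the step that fails, and it cannot be salvaged.
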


We also have the following remark about Kannan $G$-$\rho$-contractions.

\begin{rem}
For a Kannan $\widetilde G$-$\rho$-contraction $f:X\rightarrow X$, we can interchange the roles of $x$ and $y$ in (K2) since $E(\widetilde G)$ is symmetric. Having done this, we find
\begin{eqnarray*}
\rho\big(b(fx-fy)\big)&=&\rho\big(b(fy-fx)\big)\cr\\[-4mm]
&\leq&k\rho\big(a_1(fy-y)\big)+l\rho\big(a_2(fx-x)\big)\cr\\[-4mm]
&=&l\rho\big(a_2(fx-x)\big)+k\rho\big(a_1(fy-y)\big).
\end{eqnarray*}
Therefore, no matter $a_1\leq\frac b2$ or $a_2\leq\frac b2$ whenever we are faced with Kannan $\widetilde G$-$\rho$-contractions. Nevertheless, both $a_1$ and $a_2$ must be no more than $b$.
\end{rem}

We now give some examples.

\begin{exm}
Let $X$ be a modular space with any arbitrary graph $G$. Since $E(G)$ contains all loops, each constant mapping $f:X\to X$ is both a Banach and a Kannan $G$-$\rho$-contraction. In fact, $E(G)$ should contain all loops if we want any constant mapping to be either a Banach or a Kannan $G$-$\rho$-contraction.
\end{exm}

\begin{exm}
Let $X$ be a modular space and $G_0$ be the complete graph $(X,X\times X)$. Then Banach (Kannan) $G_0$-$\rho$-contractions are precisely the Banach (Kannan) contractions in modular spaces.
\end{exm}

\begin{exm}
Let $\preceq$ be a partial order on a modular space $X$ and consider a poset graph $G_1$ by $V(G_1)=X$ and $E(G_1)=\big\{(x,y)\in X\times X:x\preceq y\big\}$. Then Banach $G_1$-$\rho$-contractions are precisely the nondecreasing ordered $\rho$-contractions. A similar statement is true for Kannan $G_1$-$\rho$-contractions.
\end{exm}

Finally, we show that Banach and Kannan $G$-$\rho$-contractions are independent of each other. More precisely, we construct two mappings on $\Bbb{R}$ such that one of them satisfies (B2) but not (K2), and the other, (K2) but not (B2) for the complete graph $G_0$.

\begin{exm}
Let $\rho$ be the usual Euclidean norm on $\Bbb{R}$,
i.e., $\rho(x)=|x|$ for all $x\in\Bbb{R}$. Define a mapping $f:\Bbb{R}\rightarrow\Bbb{R}$ by
$fx=\frac{x}{3}$ for all $x\in\Bbb{R}$. Then $f$ is a Banach $G_0$-$\rho$-contraction with the constants
$k=\frac{2}{3}$, $a=\frac{1}{2}$ and $b=1$. Indeed, given any $x,y\in\Bbb{R}$, we have
$$\rho\big(b(fx-fy)\big)=\frac13|x-y|=k\rho\big(a(x-y)\big).$$
On the other hand, if $k$, $l$, $a_1$, $a_2$ and $b$ are any arbitrary positive numbers satisfying $k+l<1$, $a_1\leq \frac{b}{2}$ and
$a_2\leq b$, then for $y=0$ and any $x\ne0$ we see that
$$\rho\big(b(fx-f0)\big)=\frac{b|x|}{3}>\frac{2a_1k|x|}3=k\rho\big(a_1(fx-x)\big)+l\rho\big(a_2(f0-0)\big).$$
Therefore, (K2) fails to hold and $f$ is not a Kannan $G_0$-$\rho$-contraction.
\end{exm}

\begin{exm}
It is easy to verify that the function $\rho(x)=x^2$ defines a modular on $\Bbb{R}$ and $(\Bbb{R},\rho)$ is $\rho$-complete because $(\Bbb{R},|\cdot|)$ is a Banach space. Now, consider a mapping $f:\Bbb{R}\rightarrow\Bbb{R}$ defined by $fx=\frac{1}{2}$ if $x\neq1$ and $f1=\frac{1}{10}$. Then $f$ is a $G_0$-$\rho$-Kannan
contraction with the constants $k=\frac{64}{81}$,
$l=\frac{16}{81}$, $a_1=\frac{1}{2}$ and $a_2=b=1$. Indeed, given any
$x,y\in\Bbb{R}$, we have the following three possible cases:
\begin{enumerate}[label={Case \arabic*:},leftmargin=1.7cm]
\item If $x=y=1$ or $x,y\neq1$, then (K2) holds trivially
since $fx=fy$;
\item If $x=1$ and $y\neq1$, then
$$\rho\big(b(fx-fy)\big)=\frac{4}{25}\leq\frac{4}{25}+\frac{16}{81}\Big(\frac{1}{2}-y\Big)^2=k\rho\big(a_1(fx-x)\big)+l\rho\big(a_2(fy-y)\big);$$
\item Finally, if $x\neq1$ and $y=1$, then
$$\rho\big(b(fx-fy)\big)=\frac{4}{25}\leq\frac{16}{81}\Big(\frac{1}{2}-x\Big)^2+\frac{4}{25}=k\rho\big(a_1(fx-x)\big)+l\rho\big(a_2(fy-y)\big).$$
\end{enumerate}
Note that $k+l=\frac{80}{81}<1$, $a_1\leq\frac{b}{2}$ and $a_2\leq b$.  But $f$ is not a
Banach $G_0$-$\rho$-contraction; for if $k$, $a$ and $b$ are any arbitrary positive numbers satisfying
$k<1$ and $a<b$, then putting $x=1$ and $y=\frac{3}{5}$ yields
$$\rho\big(b(fx-fy)\big)=\frac{4b^2}{25}>\frac{4a^2k}{25}=k\rho\big(a(x-y)\big).$$
\end{exm}

Now we are going to prove our fixed point results. The first one is about the existence and uniqueness of fixed points for Banach $\widetilde G$-$\rho$-contractions.

\begin{thm}\label{banach}
Let $X$ be a $\rho$-complete modular space endowed with a graph $G$ and the triple $(X,\rho,G)$ have the following property:
\begin{itemize}[label={$(\ast)$}]
\item If $\{x_n\}$ is a sequence in $X$ such that $\beta x_n\stackrel{\rho}\longrightarrow\beta x$ for some $\beta>0$ and $(x_n,x_{n+1})\in E(\widetilde G)$ for all $n\geq1$, then there exists a subsequence $\{x_{n_i}\}$ of $\{x_n\}$ such that $(x_{n_i},x)\in E(\widetilde G)$ for all $i\geq1$.
\end{itemize}
Then a Banach $\widetilde G$-$\rho$-contraction $f:X\to X$ has a fixed point if and only if $C_f\ne\emptyset$. Moreover, this fixed point is unique if $G$ is weakly connected.
\end{thm}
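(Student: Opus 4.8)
The plan is to prove both directions via the Picard iteration controlled by the edge structure of $C_f$. The ``only if'' part is immediate: if $x^\ast\in\fix(f)$, then $f^nx^\ast=x^\ast$ for every $n$, so each pair $(f^nx^\ast,f^mx^\ast)$ is the loop $(x^\ast,x^\ast)\in E(\widetilde G)$, whence $x^\ast\in C_f$. For the ``if'' part I would fix $x_0\in C_f$, set $x_n=f^nx_0$, and record two facts: by the definition of $C_f$ every pair $(x_n,x_m)$ lies in $E(\widetilde G)$, and $f$, being a Banach $\widetilde G$-$\rho$-contraction, satisfies {\rm(B1)} and {\rm(B2)} for $\widetilde G$. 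Applying {\rm(B2)} to the pair $(x_{n-1},x_{m-1})$ and then using $a<b$ to enlarge the inner scale gives the diagonal estimate
$$\rho\big(b(x_n-x_m)\big)\le k\,\rho\big(a(x_{n-1}-x_{m-1})\big)\le k\,\rho\big(b(x_{n-1}-x_{m-1})\big),$$
which iterates to $\rho(b(x_n-x_m))\le k^{\,n}\rho(a(x_0-x_{m-n}))$ for all $m>n$.

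The crux, and the step where the absence of $\Delta_2$ and Fatou really bites, is to bound the orbit $\rho(a(x_0-x_j))$ uniformly in $j$; here the gap $a<b$ is essential. Splitting $x_0-x_j=(x_0-x_1)+(x_1-x_j)$ as a convex combination with weights $1-\tfrac ab$ and $\tfrac ab$, condition {\rm(M4)} yields
$$\rho\big(a(x_0-x_j)\big)\le\rho\Big(\tfrac{ab}{b-a}(x_0-x_1)\Big)+\rho\big(b(x_1-x_j)\big),$$
and since $(x_0,x_{j-1})\in E(\widetilde G)$ and $x_1-x_j=fx_0-fx_{j-1}$, the second term is at most $k\,\rho(a(x_0-x_{j-1}))$. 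Writing $F_j=\rho(a(x_0-x_j))$ and $C'=\rho(\tfrac{ab}{b-a}(x_0-x_1))$, which is finite precisely because $a<b$, this reads $F_j\le C'+kF_{j-1}$ with $F_0=0$, so $F_j\le\frac{C'}{1-k}$ for all $j$. Feeding this back into the diagonal estimate gives $\rho(b(x_m-x_n))\le\frac{C'}{1-k}k^{\,n}\to0$, so $\{x_n\}$ is $\rho$-Cauchy; $\rho$-completeness then produces a limit $x$ with $bx_n\stackrel{\rho}\longrightarrow bx$. I expect this uniform-bound lemma to be the main obstacle, since it is exactly the place where one would ordinarily invoke $\Delta_2$ to tame the growing scale in a telescoping sum; the convexity trick with weight $\tfrac ab$ sidesteps this by keeping every scale at most $b$ while sending the excess into a single fixed constant $C'$.

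To see that $x$ is fixed I would apply property $(\ast)$ with $\beta=b$, which is legitimate because $(x_n,x_{n+1})\in E(\widetilde G)$ and $bx_n\stackrel{\rho}\longrightarrow bx$, to extract a subsequence with $(x_{n_i},x)\in E(\widetilde G)$. Then {\rm(B2)} applied to $(x_{n_i},x)$ gives $\rho(b(x_{n_i+1}-fx))\le k\,\rho(a(x_{n_i}-x))\le k\,\rho(b(x_{n_i}-x))\to0$, and combining this with $bx_{n_i+1}\stackrel{\rho}\longrightarrow bx$ through {\rm(M4)} forces $\rho(\tfrac b2(x-fx))=0$, hence $fx=x$ by {\rm(M2)}.

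Finally, for uniqueness when $G$ is weakly connected, I would take two fixed points $x,y$, join them by a $\widetilde G$-path $x=z_0,\dots,z_N=y$, and contract along each edge: since $f$ preserves $E(\widetilde G)$, the diagonal estimate gives $\rho(b(f^nz_{s-1}-f^nz_s))\le k^{\,n}\rho(b(z_{s-1}-z_s))$. As $f^nx=x$ and $f^ny=y$, the finite convex combination with weights $\tfrac1N$ yields $\rho(\tfrac bN(x-y))\le\sum_{s=1}^N k^{\,n}\rho(b(z_{s-1}-z_s))\to0$, so $x=y$. The finiteness of the path keeps the scale $\tfrac bN\le b$ fixed, so once again neither the $\Delta_2$-condition nor the Fatou property is needed.
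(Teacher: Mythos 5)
Your proposal is correct and takes essentially the same route as the paper's proof: your convex split of $x_0-x_j$ with weights $1-\tfrac ab$ and $\tfrac ab$ is exactly the paper's decomposition via the conjugate exponent $\alpha=\tfrac b{b-a}$ (your constant $C'=\rho\big(\tfrac{ab}{b-a}(x_0-x_1)\big)$ is the paper's $r=\rho\big(\alpha a(fx-x)\big)$), your recursion $F_j\le C'+kF_{j-1}$ and bound $\tfrac{C'}{1-k}$ match the paper's induction, and the use of $(\ast)$ with $\beta=b$, the $\tfrac b2$-split for the fixed-point step, and the $\tfrac bN$-weighted path telescoping for uniqueness are all identical. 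The differences are purely notational.
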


\begin{proof}
$(\Rightarrow)$ It is trivial since $\fix(f)\subseteq C_f$.\par
$(\Leftarrow)$ Let $k$, $a$ and $b$ be the constants of $f$ and let $\alpha>1$ be the exponential conjugate of $\frac{b}{a}$,
i.e., $\frac{a}{b}+\frac{1}{\alpha}=1$. Choose an $x\in C_f$ and keep it fixed. We are going to show that the sequence $\{bf^nx\}$ is $\rho$-Cauchy in $X$. To this end, note first if $n$ is a positive integer, then by (B2) we have
\begin{eqnarray*}
\rho\big(a(f^nx-x)\big)&=&\rho\big(a(f^nx-fx)+a(fx-x)\big)\cr\\[-3mm]
&=&\rho\Big(\frac{a}{b}b\big(f^nx-fx\big)+\frac{1}{\alpha}\alpha a\big(fx-x\big)\Big)\cr\\[-3mm]
&\leq&\rho\big(b(f^nx-fx)\big)+\rho\big(\alpha a(fx-x)\big)\cr\\[-3mm]
&\leq&k\rho\big(a(f^{n-1}x-x)\big)+r,
\end{eqnarray*}
where $r=\rho(\alpha a(fx-x))$. Hence using the mathematical induction, we get
\begin{eqnarray*}
\rho\big(a(f^nx-x)\big)&\leq&k\rho\big(a(f^{n-1}x-x)\big)+r\cr\\[-4mm]
&\leq&k\Big[k\rho\big(a(f^{n-2}x-x)\big)+r\Big]+r\cr\\[-4mm]
&=&k^2\rho\big(a(f^{n-2}x-x)\big)+kr+r\cr\\[-4mm]
&\vdots&\cr\\[-4mm]
&\leq&k^{n-1}\rho\big(a(fx-x)\big)+k^{n-2}r+\cdots+r
\end{eqnarray*}
for all $n\geq1$. Since $\alpha>1$, it follows that $\rho(a(fx-x))\leq\rho(\alpha a(fx-x))=r$ and therefore,
\begin{equation}\label{eqn2}
\rho\big(a(f^nx-x)\big)\leq k^{n-1}r+\cdots+r=\frac{(1-k^n)r}{1-k}\leq\frac r{1-k}\qquad n=1,2,\ldots\,.
\end{equation}
Now using (B2) once more, we find
\begin{eqnarray}\label{eqn1}
\rho\big(b(f^mx-f^nx)\big)&\leq&k\rho\big(a(f^{m-1}x-f^{n-1}x)\big)\cr\notag\\[-4mm]
&\leq&k\rho\big(b(f^{m-1}x-f^{n-1}x)\big)\cr\notag\\[-4mm]
&\vdots&\cr\notag\\[-4mm]
&\leq&k^n\rho\big(a(f^{m-n}x-x)\big)
\end{eqnarray}
for all $m$ and $n$ with $m>n\geq1$. Consequently, by combining (\ref{eqn2}) and (\ref{eqn1}), it is seen that for
all $m>n\geq1$ we have
$$\rho\big(b(f^mx-f^nx)\big)\leq k^n\rho\big(a(f^{m-n}x-x)\big)\leq\frac{k^nr}{1-k}.$$
Therefore, $\rho(b(f^mx-f^nx))\rightarrow0$ as $m,n\rightarrow\infty$, and so $\{bf^nx\}$ is a $\rho$-Cauchy sequence in
$X$ and because $X$ is $\rho$-complete, it is $\rho$-convergent. On the other hand, $X$ is a real vector space and $b>0$. Thus, there exists an $x^*\in X$ such that $bf^nx\stackrel{\rho}\longrightarrow bx^*$.\par
We next show that $x^*$ is a fixed point for $f$. Since $x\in C_f$, it follows that $(f^nx,f^{n+1}x)\in E(\widetilde G)$ for all $n\geq0$, and so by Property $(\ast)$, there exists a strictly increasing sequence $\{n_i\}$ of positive integers such that $(f^{n_i}x,x^*)\in E(\widetilde G)$ for all $i\geq1$. Hence using (B2) we get
\begin{eqnarray*}
\rho\Big(\frac{b}{2}\big(fx^*-x^*\big)\Big)&=&\rho\Big(\frac{b}{2}\big(fx^*-f^{n_i+1}x\big)+\frac{b}{2}\big(f^{n_i+1}x-x^*\big)\Big)\cr\notag\\[-3mm]
&\leq&\rho\big(b(fx^*-f^{n_i+1}x)\big)+\rho\big(b(f^{n_i+1}x-x^*)\big)\cr\notag\\[-3mm]
&=&\rho\big(b(f^{n_i+1}x-fx^*)\big)+\rho\big(b(f^{n_i+1}x-x^*)\big)\cr\notag\\[-3mm]
&\leq&k\rho\big(a(f^{n_i}x-x^*)\big)+\rho\big(b(f^{n_i+1}x-x^*)\big)\cr\notag\\[-3mm]
&\leq&k\rho\big(b(f^{n_i}x-x^*)\big)+\rho\big(b(f^{n_i+1}x-x^*)\big)\rightarrow0
\end{eqnarray*}
as $i\rightarrow\infty$. So $\rho(\frac{b}{2}(fx^*-x^*))=0$, and since $b>0$, it follows that $fx^*-x^*=0$ or equivalently, $fx^*=x^*$, i.e., $x^*$ is a fixed point for $f$.\par
Finally, to prove the uniqueness of the fixed point, suppose that $G$ is weakly connected and $y^*\in X$ is a fixed point for $f$. Then there exists a path $(x_s)_{s=0}^N$ in $\widetilde G$ from $x^*$ to $y^*$, i.e., $x_0=x^*$, $x_N=y^*$, and $(x_{s-1},x_s)\in E(\widetilde G)$ for $s=1,\ldots,N$. Thus, by (B1), we have
$$(f^nx_{s-1},f^nx_s)\in E(\widetilde G)\qquad(n\geq0\ \text{and}\  s=1,\ldots,N).$$
And using (B2) and the mathematical induction we get
\begin{eqnarray*}
\rho\Big(\frac bN\big(x^*-y^*\big)\Big)&=&\rho\Big(\frac bN\big(x^*-f^nx_1\big)+\cdots+\frac bN\big(f^nx_{N-1}-y^*\big)\Big)\cr\\[-3mm]
&\leq&\rho\big(b(x^*-f^nx_1)\big)+\cdots+\big(b(f^nx_{N-1}-y^*)\big)\cr\\[-3mm]
&=&\sum_{s=1}^N\rho\big(b(f^nx_{s-1}-f^nx_s)\big)\cr\\
&\leq&k\sum_{s=1}^N\rho\big(a(f^{n-1}x_{s-1}-f^{n-1}x_s)\big)\cr\\
&\leq&k\sum_{s=1}^N\rho\big(b(f^{n-1}x_{s-1}-f^{n-1}x_s)\big)\cr\\
&\vdots&\cr\\[-3mm]
&\leq&k^n\sum_{s=1}^N\rho\big(b(x_{s-1}-x_s)\big)\rightarrow0
\end{eqnarray*}
as $n\rightarrow\infty$. So $\frac bN(x^*-y^*)=0$, and since $b>0$, it follows that $x^*=y^*$. Consequently, the fixed point of $f$ is unique.
\end{proof}

Setting $G=G_0$ and $G=G_1$, we get the following consequences of Theorem \ref{banach} in modular and partially ordered modular spaces, respectively.

\begin{cor}
Let $X$ be a $\rho$-complete modular space and a mapping $f:X\to X$ satisfies
$$\rho\big(b(fx-fy)\big)\leq k\rho\big(a(x-y)\big)\qquad(x,y\in X),$$
where $0<k<1$ and $0<a<b$. Then $f$ has a unique fixed point $x^*\in X$ and $bf^nx\stackrel{\rho}\longrightarrow bx^*$ for all $x\in X$.
\end{cor}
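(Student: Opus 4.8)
The plan is to obtain this corollary as the specialization of Theorem \ref{banach} to the complete graph $G_0=(X,X\times X)$, so that the work reduces to verifying that every hypothesis of that theorem is met (in fact in its strongest, degenerate form) and to tracking the convergence claim through the theorem's proof.

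First I would observe that, since $E(G_0)=X\times X$ contains every ordered pair, the contraction inequality assumed here is exactly Condition (B2) for $G_0$, while (B1) holds vacuously because every image pair already lies in $E(G_0)$. Hence $f$ is a Banach $G_0$-$\rho$-contraction (these are precisely the ordinary Banach contractions in modular spaces, as noted in the example for $G_0$), and because $\widetilde{G_0}=G_0$, it is also a Banach $\widetilde{G_0}$-$\rho$-contraction with the same constants $k$, $a$, $b$.

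Next I would check the remaining hypotheses, all of which trivialize for $G_0$. Property $(\ast)$ holds automatically: since $E(\widetilde{G_0})=X\times X$, for any sequence $\{x_n\}$ and its limit one may take the full sequence as the required subsequence and still have $(x_n,x)\in E(\widetilde{G_0})$ for every $n$. The set $C_f$ equals all of $X$, because every pair $(f^nx,f^mx)$ automatically lies in $E(\widetilde{G_0})$; in particular $C_f\neq\emptyset$, as it contains $0$. Finally $G_0$ is weakly connected, since any two vertices are joined by the single edge between them. Applying Theorem \ref{banach} then yields a fixed point, which is unique by weak connectedness.

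The only point requiring a little care is the assertion $bf^nx\stackrel{\rho}\longrightarrow bx^*$ for \emph{every} $x\in X$, rather than merely for one starting point. Here I would invoke the construction inside the proof of Theorem \ref{banach}: for each $x\in C_f$ the sequence $\{bf^nx\}$ was shown to be $\rho$-Cauchy and hence $\rho$-convergent to $bx^*$ for some fixed point $x^*$. Since $C_f=X$ in the present setting, this holds for an arbitrary $x\in X$, and by the uniqueness already established every such limit is the same point $x^*$. I do not anticipate a genuine obstacle; the argument is simply a matter of confirming that the complete graph trivializes each structural assumption of Theorem \ref{banach} and that its proof delivers the global convergence for free.
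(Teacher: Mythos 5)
Your proposal is correct and takes essentially the same route as the paper, which obtains this corollary precisely by setting $G=G_0$ in Theorem \ref{banach}, with the complete graph trivializing (B1), Property $(\ast)$, the nonemptiness of $C_f$, and weak connectedness exactly as you describe. Your additional care in extracting the convergence $bf^nx\stackrel{\rho}\longrightarrow bx^*$ for every $x\in X$ from the proof of the theorem (it is not part of the theorem's statement) together with uniqueness of the fixed point is exactly the step the paper implicitly relies on.
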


\begin{cor}
Let $\preceq$ be a partial order on a $\rho$-complete modular space $X$ such that the triple $(X,\rho,\preceq)$ has the following property:
\begin{itemize}[label={$(\ast\ast)$}]
\item If $\{x_n\}$ is a nondecreasing sequence in $X$ such that $\beta x_n\stackrel{\rho}\longrightarrow\beta x$ for some $\beta>0$, then there exists a subsequence $\{x_{n_i}\}$ of $\{x_n\}$ such that $x_{n_i}\preceq x$ for all $i\geq1$.
\end{itemize}
Assume that a nondecreasing mapping $f:X\to X$ satisfies
$$\rho\big(b(fx-fy)\big)\leq k\rho\big(a(x-y)\big)\qquad(x,y\in X\ \text{and}\ x\preceq y),$$
where $0<k<1$ and $0<a<b$. Then $f$ has a fixed point if and only if there exists an $x\in X$ such that $T^nx$ is comparable to $T^mx$ for all $m,n\geq0$. Moreover, this fixed point is unique if the following condition holds:
\begin{itemize}[label={}]
\item For all $x,y\in X$, there exists a finite sequence $(x_s)_{s=0}^N$ in $X$ with comparable successive terms such that $x_0=x$ and $x_N=y$.
\end{itemize}
\end{cor}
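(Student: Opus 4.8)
The plan is to treat the two directions separately. The forward implication is immediate: if $x^*$ is a fixed point then $f^n x^* = x^*$ for every $n$, so $(f^n x^*, f^m x^*) = (x^*, x^*) \in E(\widetilde G)$ because $E(\widetilde G)$ contains all loops; hence $x^* \in C_f$ and $C_f \neq \emptyset$. For the converse I would fix some $x \in C_f$ and aim to show that the orbit $\{b f^n x\}$ is $\rho$-Cauchy, so that $\rho$-completeness produces a candidate limit $b x^*$ for the fixed point.

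The central difficulty is that a modular only obeys the weak triangle inequality (M4), valid for convex combinations, rather than an honest triangle inequality. To split a difference such as $f^n x - x$ into the telescoping pieces $f^n x - fx$ and $fx - x$ I would therefore introduce the exponential conjugate $\alpha > 1$ of $b/a$, defined by $\frac{a}{b} + \frac{1}{\alpha} = 1$, and write $a(f^n x - x) = \frac{a}{b}\, b(f^n x - fx) + \frac{1}{\alpha}\, \alpha a(fx - x)$ so that (M4) applies. This yields the one-step recursion $\rho(a(f^n x - x)) \le k\,\rho(a(f^{n-1}x - x)) + r$ with $r = \rho(\alpha a(fx - x))$, and induction turns it into a geometric series bounded by $r/(1-k)$, giving a uniform bound on $\rho(a(f^n x - x))$. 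Next, iterating (B2) along the pair $(f^{m-1}x, f^{n-1}x)$, which lies in $E(\widetilde G)$ because $x \in C_f$, together with the elementary consequence of (M4) that $|a| \le |b|$ forces $\rho(au) \le \rho(bu)$, I would peel off factors of $k$ and obtain $\rho(b(f^m x - f^n x)) \le k^n \rho(a(f^{m-n}x - x)) \le k^n r/(1-k)$, which tends to $0$. Thus $\{b f^n x\}$ is $\rho$-Cauchy.

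Having secured $b f^n x \stackrel{\rho}{\longrightarrow} b x^*$ for some $x^*$, I would verify that $x^*$ is fixed. Here Property $(\ast)$ is essential: since $(f^n x, f^{n+1}x) \in E(\widetilde G)$ for all $n$ and $b f^n x \stackrel{\rho}{\longrightarrow} b x^*$, it supplies a subsequence $\{f^{n_i}x\}$ with $(f^{n_i}x, x^*) \in E(\widetilde G)$, so (B2) is again available along these pairs. Splitting $\frac{b}{2}(fx^* - x^*) = \frac{b}{2}(fx^* - f^{n_i+1}x) + \frac{b}{2}(f^{n_i+1}x - x^*)$ and applying (M4), both resulting terms vanish as $i \to \infty$, the first by (B2) combined with $a < b$ and the second by convergence, forcing $\rho(\frac{b}{2}(fx^* - x^*)) = 0$ and hence $fx^* = x^*$.

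For uniqueness under weak connectedness, given another fixed point $y^*$ I would take a path $(x_s)_{s=0}^N$ in $\widetilde G$ from $x^*$ to $y^*$, propagate it under iteration via edge-preservation (B1) to get $(f^n x_{s-1}, f^n x_s) \in E(\widetilde G)$, and then estimate $\rho(\frac{b}{N}(x^* - y^*))$ by writing $x^* - y^*$ as a telescoping sum over the path and applying the $n$-term form of (M4). Iterating (B2) down the orbit and using $a < b$ at each stage yields a bound of the form $k^n \sum_s \rho(b(x_{s-1} - x_s)) \to 0$, so $x^* = y^*$. The main obstacle throughout is the absence of the ordinary triangle inequality; the recurring device that overcomes it is the convex-combination splitting, via the exponential conjugate for the orbit estimate and via the equal weights $\frac{1}{2}$ or $\frac{1}{N}$ elsewhere, combined with the monotonicity $\rho(au) \le \rho(bu)$ for $a < b$.
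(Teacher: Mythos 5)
Your write-up is, in substance, a correct reconstruction of the paper's proof of Theorem 1 (the graph-theoretic Banach result): the exponential-conjugate splitting $\frac ab+\frac1\alpha=1$, the uniform bound $r/(1-k)$ on $\rho(a(f^nx-x))$, the Cauchy estimate $k^nr/(1-k)$, the use of Property $(\ast)$ to show the limit is fixed, and the telescoping path argument with weights $\frac1N$ for uniqueness all match the paper essentially line by line. The problem is that this is not the statement you were asked to prove. The statement is the specialization to a partially ordered modular space, and every hypothesis your proof invokes --- $E(\widetilde G)$, the set $C_f$, Property $(\ast)$, weak connectedness, (B1), (B2) --- appears nowhere in it; its actual hypotheses are a partial order $\preceq$, a nondecreasing map, a contraction inequality assumed only for $x\preceq y$, Property $(\ast\ast)$, comparability of the points of some orbit, and a finite-sequence condition with comparable successive terms. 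What is missing is exactly the content of the paper's own (one-line) proof of this corollary: introduce the poset graph $G_1$ with $V(G_1)=X$ and $E(G_1)=\{(x,y)\in X\times X:x\preceq y\}$, and verify the dictionary --- $E(\widetilde{G_1})$ is the set of comparable pairs; ``$f$ nondecreasing'' gives (B1); the assumed inequality for $x\preceq y$ together with (M3) gives (B2) for $\widetilde{G_1}$ (this is the paper's Proposition 1); the orbit-comparability condition says $C_f\ne\emptyset$; the finite-sequence condition says $G_1$ is weakly connected --- and then apply (or inline, as you did) Theorem 1.

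Moreover, the dictionary is not purely formal at the one point where your argument leans on Property $(\ast)$: for the graph $G_1$, Property $(\ast)$ concerns sequences whose successive terms are \emph{comparable}, whereas the corollary assumes only $(\ast\ast)$, which concerns \emph{nondecreasing} sequences. The orbit $\{f^nx\}$ of a point with pairwise comparable orbit is a chain but need not be nondecreasing, so $(\ast\ast)$ cannot be applied to it verbatim; one would have to extract a monotone subsequence (possible in any chain) and still handle the nonincreasing case, which $(\ast\ast)$ does not cover. The paper itself glosses over this when it asserts that the corollary follows by setting $G=G_1$, but since your proof never mentions $(\ast\ast)$, nondecreasingness, or the order at all, in your case this is a genuine gap rather than an inherited one: as written, the proposal proves a theorem whose hypotheses are not those of the statement.
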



\begin{cor}
Let $(X,\rho)$ be a $\rho$-complete modular space endowed with a graph $G$, where $\rho$ is a convex modular, and the triple $(X,\rho,G)$ have Property $(\ast)$. Assume that $f:X\to X$ is a mapping which preserves the edges of $\widetilde G$ and satisfies
$$\rho\big(b(fx-fy)\big)\leq k\rho\big(a(x-y)\big)\qquad\big(x,y\in X\ \text{and}\ (x,y)\in E(\widetilde G)\big),$$
where $k$, $a$ and $b$ are positive numbers with $b>\max\{a,ak\}$. Then $f$ has a fixed point if and only if $C_f\ne\emptyset$. Moreover, this fixed point is unique if $G$ is weakly connected.
\end{cor}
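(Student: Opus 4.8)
The plan is to deduce this corollary directly from Theorem \ref{banach} by verifying that, under the convexity of $\rho$ together with the constant condition $b>\max\{a,ak\}$, the mapping $f$ is itself a Banach $\widetilde G$-$\rho$-contraction in the sense of Definition \ref{bankan}. Once that is in hand, everything else is automatic: $X$ is $\rho$-complete, the triple $(X,\rho,G)$ is assumed to satisfy Property $(\ast)$, and $f$ preserves the edges of $\widetilde G$ (which is exactly (B1) for $\widetilde G$), so the equivalence ``$f$ has a fixed point $\iff C_f\ne\emptyset$'' and the uniqueness under weak connectivity are read off verbatim from Theorem \ref{banach}.

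The crux is thus to produce constants realizing (B2) with a genuine contraction factor. First I would record the only consequence of convexity I need: putting $y=0$ in (M4$'$) and using (M2) gives $\rho(\lambda z)\leq\lambda\rho(z)$ for all $z\in X$ and all $\lambda\in[0,1]$. Since $b>\max\{a,ak\}$, I may choose a number $a'$ with $\max\{a,ak\}<a'<b$. Because $a'>a$, the ratio $\tfrac{a}{a'}$ lies in $(0,1)$, so the convexity estimate yields $\rho\big(a(x-y)\big)=\rho\big(\tfrac{a}{a'}\,a'(x-y)\big)\leq\tfrac{a}{a'}\rho\big(a'(x-y)\big)$ for all $x,y\in X$. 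Combining this with the hypothesis, for every $(x,y)\in E(\widetilde G)$ I obtain
$$\rho\big(b(fx-fy)\big)\leq k\rho\big(a(x-y)\big)\leq\frac{ka}{a'}\rho\big(a'(x-y)\big).$$
Setting $k'=\tfrac{ka}{a'}$, the inequality $a'>ak$ forces $k'<1$, and $a'<b$ holds by construction, so $f$ satisfies (B2) with constants $k',a',b$ and is a Banach $\widetilde G$-$\rho$-contraction.

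The main (indeed essentially the only) obstacle is this rescaling step, since the given $k$ need not be less than $1$ while the hypotheses of Theorem \ref{banach} explicitly require a contraction factor below $1$. The gap $b>\max\{a,ak\}$ is tailored precisely so that the interval $(\max\{a,ak\},b)$ is nonempty and an admissible $a'$ can be selected, and convexity is indispensable here: for a general (non-convex) modular only the weaker monotonicity $\rho(\lambda x)\leq\rho(\mu x)$ for $|\lambda|\leq|\mu|$ is available, which cannot shrink the factor $k$ below $1$. With $f$ recognized as a Banach $\widetilde G$-$\rho$-contraction, I would simply invoke Theorem \ref{banach} to conclude.
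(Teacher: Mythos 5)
Your proposal is correct and follows essentially the same route as the paper: choosing $a'\in(\max\{a,ak\},b)$ (the paper's $a_0$), using convexity to absorb the ratio $\tfrac{a}{a'}$ into the contraction constant via $\rho\big(\tfrac{a}{a'}a'(x-y)\big)\leq\tfrac{a}{a'}\rho\big(a'(x-y)\big)$, and then invoking Theorem \ref{banach} for the rescaled Banach $\widetilde G$-$\rho$-contraction. The only cosmetic difference is that you isolate the convexity inequality $\rho(\lambda z)\leq\lambda\rho(z)$ as a preliminary remark, whereas the paper carries it out inline by writing $a(x-y)$ as the convex combination $\tfrac{a}{a_0}a_0(x-y)+\big(1-\tfrac{a}{a_0}\big)0$.
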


\begin{proof}
Set $c=\max\{a,ak\}$ and choose any $a_0\in(c,b)$. Then by the hypothesis and convexity of $\rho$, we have
\begin{eqnarray*}
\rho\big(b(fx-fy)\big)&\leq&k(\rho\big(a(x-y)\big)\cr\\[-2mm]
&=&k\rho\Big(\frac a{a_0}a_0\big(x-y\big)+\big(1-\frac a{a_0}\big)0\Big)\cr\\[-2mm]
&\leq&\frac{ak}{a_0}\rho\big(a_0(x-y)\big)
\end{eqnarray*}
for all $x,y\in X$ with $(x,y)\in E(\widetilde G)$. Since $a_0<b$, and $\frac{ak}{a_0}<1$, it follows that $f$ satisfies (B2) for the graph $\widetilde G$ with the constants $k$ and $a$ replaced with $\frac{ak}{a_0}$ and $a_0$, respectively, and $b$ kept fixed. Since $f$ preserves the edges of $\widetilde G$, it follows that $f$ is a Banach $\widetilde G$-$\rho$-contraction and the results are concluded immediately from Theorem \ref{banach}.
\end{proof}

Our next result is about the existence and uniqueness of fixed points for Kannan $\widetilde G$-$\rho$-contractions.

\begin{thm}\label{kannan}
Let $X$ be a $\rho$-complete modular space endowed with a graph $G$ and the triple $(X,\rho,G)$ have Property $(\ast)$. Then a Kannan $\widetilde G$-$\rho$-contraction $f:X\to X$ has a fixed point if and only if $C_f\ne\emptyset$. Moreover, this fixed point is unique if $k<\frac12$ and $X$ satisfies the following condition:
\begin{itemize}[label={$(\star)$}]
\item For all $x,y\in X$, there exists a $z\in X$ such that $(x,z),(y,z)\in E(\widetilde G)$.
\end{itemize}
\end{thm}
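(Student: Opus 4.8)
The plan is to mirror the structure of the proof of Theorem \ref{banach}, adapting the iteration to the Kannan estimate (K2). The forward implication is immediate: if $x^*\in\fix(f)$ then $f^nx^*=x^*$ for all $n$, so $(f^nx^*,f^mx^*)=(x^*,x^*)\in E(\widetilde G)$ and hence $x^*\in C_f$. For the converse, I fix an $x\in C_f$ and first control the consecutive differences $d_n:=\rho\big(b(f^{n+1}x-f^nx)\big)$. Applying (K2) to the pair $(f^nx,f^{n-1}x)\in E(\widetilde G)$ and using $a_1,a_2\leq b$ together with the monotonicity consequence of (M4), the term $k\rho\big(a_1(f^{n+1}x-f^nx)\big)$ is absorbed on the left, giving $d_n\leq\frac{l}{1-k}d_{n-1}$; since $k+l<1$ the ratio $h:=\frac{l}{1-k}$ is less than $1$, so $d_n\leq h^nd_0$.

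Next I show $\{bf^nx\}$ is $\rho$-Cauchy. For $m>n\geq1$, applying (K2) to $(f^{m-1}x,f^{n-1}x)\in E(\widetilde G)$ and bounding $a_1,a_2\leq b$ yields $\rho\big(b(f^mx-f^nx)\big)\leq kd_{m-1}+ld_{n-1}\leq(kh^{m-1}+lh^{n-1})d_0\to0$. By $\rho$-completeness and $b>0$ there is an $x^*\in X$ with $bf^nx\stackrel{\rho}\longrightarrow bx^*$, and Property $(\ast)$ furnishes a subsequence with $(f^{n_i}x,x^*)\in E(\widetilde G)$. To verify $fx^*=x^*$, I write $\frac b2(fx^*-x^*)=\frac12 b(fx^*-f^{n_i+1}x)+\frac12 b(f^{n_i+1}x-x^*)$ and use the convex-combination estimate. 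The crucial point is to apply (K2) to the pair $(x^*,f^{n_i}x)$ (its symmetric orientation, legitimate since $E(\widetilde G)$ is symmetric), so that the fixed-point difference $fx^*-x^*$ is weighted by the \emph{small} constant $a_1\leq\frac b2$. This gives $\rho\big(b(fx^*-f^{n_i+1}x)\big)\leq k\rho\big(a_1(fx^*-x^*)\big)+l\rho\big(a_2(f^{n_i+1}x-f^{n_i}x)\big)$, where the last term is $\leq ld_{n_i}\to0$ and the second piece $\rho\big(b(f^{n_i+1}x-x^*)\big)\to0$. Letting $i\to\infty$ leaves $\rho\big(\frac b2(fx^*-x^*)\big)\leq k\rho\big(a_1(fx^*-x^*)\big)\leq k\rho\big(\frac b2(fx^*-x^*)\big)$, and since $k<1$ this forces $\rho\big(\frac b2(fx^*-x^*)\big)=0$, i.e. $fx^*=x^*$.

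For uniqueness, assume $k<\frac12$ and $(\star)$, and let $x^*,y^*$ be fixed points. By $(\star)$ pick $z$ with $(x^*,z),(y^*,z)\in E(\widetilde G)$; since $f$ preserves the edges of $\widetilde G$ and $x^*,y^*$ are fixed, $(x^*,f^nz),(y^*,f^nz)\in E(\widetilde G)$ for all $n\geq0$. Writing $p_n:=\rho\big(b(f^nz-x^*)\big)$ and $q_n:=\rho\big(\frac b2(f^{n+1}z-f^nz)\big)$, I apply (K2) to $(f^nz,x^*)$: the term $l\rho\big(a_2(fx^*-x^*)\big)$ vanishes, leaving $p_{n+1}\leq k\rho\big(a_1(f^{n+1}z-f^nz)\big)\leq kq_n$ (using $a_1\leq\frac b2$). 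The convex-combination estimate on $f^{n+1}z-f^nz=(f^{n+1}z-x^*)+(x^*-f^nz)$ gives $q_n\leq p_{n+1}+p_n\leq kq_n+kq_{n-1}$, hence $q_n\leq\frac{k}{1-k}q_{n-1}$. Here $k<\frac12$ is exactly what makes $\frac{k}{1-k}<1$, so $q_n\to0$ and therefore $p_{n+1}\leq kq_n\to0$, i.e. $bf^nz\stackrel{\rho}\longrightarrow bx^*$. The identical argument with $y^*$ in place of $x^*$ gives $bf^nz\stackrel{\rho}\longrightarrow by^*$, and uniqueness of $\rho$-limits forces $x^*=y^*$.

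The main obstacle throughout is the lack of homogeneity and of a triangle inequality for $\rho$, which blocks the usual absorption of a $k\rho(\cdot)$ term after a triangle split. Both halves of the argument rest on two devices: choosing the orientation of (K2) so that the decisive difference carries the small constant $a_1\leq\frac b2$ (so that, after a $\frac12/\frac12$ convex split, $k\rho(a_1\cdot)\leq k\rho(\frac b2\cdot)$ can be absorbed), and, in the uniqueness part, coping with the fact that the auxiliary point $z$ need not lie in $C_f$, so that no consecutive-edge information $(f^nz,f^{n+1}z)\in E(\widetilde G)$ is available. The recursion $q_n\leq\frac{k}{1-k}q_{n-1}$ circumvents this by bounding consecutive differences purely through the edges to the fixed point $x^*$, and it is precisely this step that requires the hypothesis $k<\frac12$ rather than merely $k+l<1$.
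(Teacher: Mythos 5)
Your proposal is correct and takes essentially the same approach as the paper: the same $\delta=\frac{l}{1-k}$ Cauchy estimate for $\{bf^nx\}$, the same choice of orientation in (K2) so that $fx^*-x^*$ carries the small constant $a_1\leq\frac b2$ and can be absorbed after the $\frac12/\frac12$ convex split, and the same $\frac{k}{1-k}$-recursion through the auxiliary point $z$ furnished by $(\star)$ for uniqueness. The only cosmetic differences are that the paper runs the uniqueness recursion directly on $p_n=\rho\big(b(f^nz-x^*)\big)$ via $p_n\leq k(p_n+p_{n-1})$ whereas you recurse on the consecutive differences $q_n$, and the paper treats the case where $(x^*,y^*)$ is itself an edge of $\widetilde G$ as a separate (one-line) case, which your uniform use of $(\star)$ subsumes.
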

\begin{proof}
$(\Rightarrow)$ It is trivial since $\fix(f)\subseteq C_f$.\par
$(\Leftarrow)$ Let $k$, $l$, $a_1$, $a_2$ and $b$ be the constants of $f$. Choose an $x\in C_f$ and keep it fixed. We are going to show that the sequence $\{bf^nx\}$ is $\rho$-Cauchy in $X$. Given any integer $n\geq2$, by (K2) we have
\begin{eqnarray*}
\rho\big(b(f^nx-f^{n-1}x)\big)&\leq&k\rho\big(a_1(f^nx-f^{n-1}x)\big)+l\rho\big(a_2(f^{n-1}x-f^{n-2}x)\big)\cr\\[-4mm]
&\leq&k\rho\big(b(f^nx-f^{n-1}x)\big)+l\rho\big(b(f^{n-1}x-f^{n-2}x)\big),
\end{eqnarray*}
which yields
$$\rho\big(b(f^nx-f^{n-1}x)\big)\leq\delta\rho\big(b(f^{n-1}x-f^{n-2}x)\big),$$
where $\delta=\frac{l}{1-k}\in(0,1)$. Hence using the mathematical induction, we get
$$\rho\big(b(f^nx-f^{n-1}x)\big)\leq\delta^n\rho\big(b(fx-x)\big)\qquad n=1,2,\ldots\,.$$
Now using (K2) once more, we find
\begin{eqnarray*}
\rho\big(b(f^mx-f^nx)\big)&\leq&k\rho\big(a_1(f^mx-f^{m-1}x)\big)+l\rho\big(a_2(f^nx-f^{n-1}x)\big)\cr\\[-4mm]
&\leq&k\rho\big(b(f^mx-f^{m-1}x)\big)+l\rho\big(b(f^nx-f^{n-1}x)\big)\cr\\[-4mm]
&\leq& k\delta^m\rho\big(b(fx-x)\big)+l\delta^n\rho\big(b(fx-x)\big)
\end{eqnarray*}
for all $m,n\geq1$. Therefore, $\rho(b(f^mx-f^nx))\rightarrow0$ as $m,n\rightarrow\infty$, and so $\{bf^nx\}$ is a $\rho$-Cauchy sequence in $X$ and because $X$ is $\rho$-complete, it is $\rho$-convergent. Thus, there
exists an $x^*\in X$ such that $bf^nx\stackrel{\rho}\longrightarrow bx^*$.\par
We next show that $x^*$ is a fixed point for $f$. Since $x\in C_f$, it follows that $(f^nx,f^{n+1}x)\in E(\widetilde G)$ for all $n\geq0$, and so by Property $(\ast)$, there exists a strictly increasing sequence $\{n_i\}$ of positive integers such that $(f^{n_i}x,x^*)\in E(\widetilde G)$ for all $i\geq1$. Hence using (K2), we get
\begin{eqnarray*}
\rho\Big(\frac{b}{2}\big(fx^*-x^*\big)\Big)&=&\rho\Big(\frac{b}{2}\big(fx^*-f^{n_i+1}x\big)+\frac{b}{2}\big(f^{n_i+1}x-x^*\big)\Big)\cr\\[-3mm]
&\leq&\rho\big(b(fx^*-f^{n_i+1}x)\big)+\rho\big(b(f^{n_i+1}x-x^*)\big)\cr\\[-3mm]
&\leq&\Big[k\rho\big(a_1(fx^*-x^*)\big)+l\rho\big(a_2(f^{n_i+1}x-f^{n_i}x)\big)\Big]+\rho\big(b(f^{n_i+1}x-x^*)\big)\cr\\[-3mm]
&\leq&k\rho\Big(\frac{b}{2}\big(fx^*-x^*\big)\Big)+l\rho\big(b(f^{n_i+1}x-f^{n_i}x)\big)+\rho\big(b(f^{n_i+1}x-x^*\big)\Big)
\end{eqnarray*}
for all $k\geq1$. Hence
$$\rho\Big(\frac{b}{2}\big(fx^*-x^*\big)\Big)\leq\delta\rho\big(b(f^{n_i+1}x-f^{n_i}x)\big)+\frac{1}{1-k}\rho\big(b(f^{n_i+1}x-x^*)\big)\rightarrow0$$
as $i\rightarrow\infty$. So $\rho(\frac{b}{2}(fx^*-x^*))=0$, and since $b>0$, it follows that $fx^*-x^*=0$ or equivalently, $fx^*=x^*$, i.e., $x^*$ is a fixed point for $f$.\par
Finally, to prove the uniqueness of the fixed point, suppose that Condition $(\star)$ holds and $y^*\in X$ is a fixed point for $f$. We consider the following two cases:\par\vspace{2.5mm}
{\bf Case 1: $\boldsymbol{(x^*,y^*)}$ is an edge of $\boldsymbol{\widetilde G}$.}\par
In this case, using (K2), we find
$$\rho\big(b(x^*-y^*)\big)=\rho\big(b(fx^*-fy^*)\big)\leq k\rho\big(a_1(fx^*-x^*)\big)+l\rho\big(b(fy^*-y^*)\big)=0.$$
Therefore, $\rho(b(x^*-y^*))=0$, and so $x^*=y^*$ because $b>0$.\par\vspace{2.5mm}
{\bf Case 2: $\boldsymbol{(x^*,y^*)}$ is not an edge of $\boldsymbol{\widetilde G}$.}\par
In this case, by Condition $(\star)$, there exists a $z\in X$ such that both $(x^*,z)$ and $(y^*,z)$ are edges of $\widetilde G$. So by (K1), we have $(x^*,f^nz),(y^*,f^nz)\in E(\widetilde G)$ for all $n\geq0$ since $x^*$ is a fixed point for $f$. Therefore, by (K2) we find
\begin{eqnarray*}
\rho\big(b(f^nz-x^*)\big)&=&\rho\big(b(f^nz-f^nx^*)\big)\cr\\[-2mm]
&\leq&k\rho\big(a_1(f^nz-f^{n-1}z)\big)+l\rho\big(a_2(f^nx^*-f^{n-1}x^*)\big)\cr\\[-2mm]
&\leq&k\rho\Big(\frac b2\big(f^nz-f^{n-1}z\big)\Big)\cr\\[-2mm]
&=&k\rho\Big(\frac b2\big(f^nz-f^nx^*\big)+\frac b2\big(f^{n-1}x^*-f^{n-1}z\big)\Big)\cr\\[-2mm]
&\leq&k\rho\big(b(f^nz-f^nx^*)\big)+k\rho\big(b(f^{n-1}x^*-f^{n-1}z)\big)\cr\\[-2mm]
&=&k\rho\big(b(f^nz-x^*)\big)+k\rho\big(b(f^{n-1}z-x^*)\big)
\end{eqnarray*}
for all $n\geq1$, which yields
$$\rho\big(b(f^nz-x^*)\big)\leq\lambda\rho\big(b(f^{n-1}z-x^*)\big),$$
where $\lambda=\frac k{1-k}\in(0,1)$ because $k<\frac12$. So by the mathematical induction, we get
$$\rho\big(b(f^nz-x^*)\big)\leq\lambda^n\rho\big(b(z-x^*)\big)\qquad n=0,1,\ldots\,.$$
Since $\lambda<1$, it follows that $bf^nz\stackrel{\rho}\longrightarrow bx^*$. Similarly, one can show that $bf^nz\stackrel{\rho}\longrightarrow by^*$, and so $bx^*=by^*$ because the limit of a $\rho$-convergent sequence in a modular space is unique. Thus, from $b>0$, it follows that $x^*=y^*$.\par
Consequently, the fixed point of $f$ is unique.
\end{proof}

Setting $G=G_0$ and $G=G_1$ once again, we get the following consequences of Theorem \ref{kannan} in modular and partially ordered modular spaces, respectively.

\begin{cor}
Let $X$ be a $\rho$-complete modular space and a mapping $f:X\to X$ satisfies
$$\rho\big(b(fx-fy)\big)\leq k\rho\big(a_1(fx-x)\big)+l\rho\big(a_2(fy-y)\big)\qquad(x,y\in X),$$
where $k,l,a_1,a_2$ and $b$ are positive with $k+l<1$, $a_1\leq\frac{b}{2}$ and
$a_2\leq b$. Then $f$ has a unique fixed point $x^*\in X$ and $bf^nx\stackrel{\rho}\longrightarrow bx^*$ for all $x\in X$.
\end{cor}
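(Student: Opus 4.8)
The plan is to specialize Theorem~\ref{kannan} to the complete graph $G_0=(X,X\times X)$, for which every ordered pair is an edge and hence $E(\widetilde{G_0})=X\times X$. First I would record that this choice trivializes all the graph-theoretic hypotheses of the theorem: the edge-preservation condition (K1) holds automatically because every pair is an edge; the inequality assumed here, valid for all $x,y\in X$, is precisely (K2) restricted to the edges of $\widetilde{G_0}$, so $f$ is a Kannan $\widetilde{G_0}$-$\rho$-contraction with the stated constants; Property~$(\ast)$ holds vacuously since its conclusion $(x_{n_i},x)\in E(\widetilde G)$ is met by any pair; and $C_f=X$, which is nonempty as $0\in X$. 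Applying the ``$\Leftarrow$'' direction of Theorem~\ref{kannan} then yields a fixed point $x^*$, and the $\rho$-Cauchy construction in that proof gives $bf^nx\stackrel{\rho}\longrightarrow bx^*$ for the chosen starting point.

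The delicate point is uniqueness: Theorem~\ref{kannan} guarantees it only under the extra assumptions $k<\frac12$ and Condition~$(\star)$, neither of which is imposed here. The resolution I would use is that in the complete graph any two points — in particular any two fixed points $x^*$ and $y^*$ — are joined by an edge of $\widetilde{G_0}$, so the uniqueness argument never leaves Case~1 of the theorem's proof, and the restrictive hypotheses needed only in Case~2 become irrelevant. Concretely, from $fx^*=x^*$ and $fy^*=y^*$ the inequality (K2) gives
$$\rho\big(b(x^*-y^*)\big)=\rho\big(b(fx^*-fy^*)\big)\leq k\rho\big(a_1\cdot 0\big)+l\rho\big(a_2\cdot 0\big)=0,$$
so $x^*=y^*$ by (M2) and $b>0$. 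Thus uniqueness holds under $k+l<1$ alone, which is exactly what the corollary asserts.

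Finally I would combine the two parts: because the fixed point is now known to be unique, it cannot depend on the starting point, so for every $x\in X=C_f$ the convergence $bf^nx\stackrel{\rho}\longrightarrow bx^*$ holds with one and the same $x^*$. The main obstacle is therefore conceptual rather than computational — recognizing that the completeness of $G_0$ collapses the uniqueness argument into its easy case and so removes the need for $k<\frac12$ and Condition~$(\star)$; once this is seen, every remaining assertion is read off directly from Theorem~\ref{kannan}.
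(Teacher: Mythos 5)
Your proposal is correct and follows the same route as the paper: the paper obtains this corollary simply by setting $G=G_0$ (the complete graph) in Theorem~\ref{kannan}, exactly as you do. However, you go one step further than the paper, and that extra step is genuinely needed. The corollary asserts uniqueness assuming only $k+l<1$, whereas the statement of Theorem~\ref{kannan} guarantees uniqueness only under the additional hypotheses $k<\frac12$ and Condition~$(\star)$; so uniqueness here cannot be read off from the theorem's \emph{statement} alone. Your repair---observing that in $\widetilde{G_0}$ any two fixed points $x^*,y^*$ are joined by an edge, so the uniqueness argument collapses to Case~1 of the theorem's proof, namely $\rho\big(b(x^*-y^*)\big)=\rho\big(b(fx^*-fy^*)\big)\leq k\rho\big(a_1(fx^*-x^*)\big)+l\rho\big(a_2(fy^*-y^*)\big)=0$, which requires neither $k<\frac12$ nor Condition~$(\star)$---is exactly the right one, and it is evidently what the paper's one-line derivation implicitly relies on. Your treatment of the convergence claim is likewise correct and necessary: $bf^nx\stackrel{\rho}\longrightarrow bx^*$ is not part of the theorem's statement but is extracted from its proof (the $\rho$-Cauchy construction applied to each starting point $x\in C_f=X$), and the already-established uniqueness makes the limit independent of the starting point. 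In short, your proof is the paper's intended argument with the gap between the theorem's hypotheses and the corollary's claim properly bridged.
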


\begin{cor}
Let $\preceq$ be a partial order on a $\rho$-complete modular space $X$ such that the triple $(X,\rho,\preceq)$ has Property $(\ast\ast)$. Assume that a nondecreasing mapping $f:X\to X$ satisfies
$$\rho\big(b(fx-fy)\big)\leq k\rho\big(a_1(fx-x)\big)+l\rho\big(a_2(fy-y)\big)\qquad(x,y\in X,\ \text{and either}\ x\preceq y\ \text{or}\ y\preceq x),$$
where $k,l,a_1,a_2$ and $b$ are positive with $k+l<1$, $a_1\leq\frac{b}{2}$ and
$a_2\leq b$. Then $f$ has a fixed point if and only if there exists an $x\in X$ such that $T^nx$ is comparable to $T^mx$ for all $m,n\geq0$. Moreover, this fixed point is unique if $k<\frac12$ and each pair of elements of $X$ has either an upper or a lower bound.
\end{cor}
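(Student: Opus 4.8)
The plan is to deduce this corollary from Theorem \ref{kannan} by taking $G=G_1$, the poset graph associated with $\preceq$. First I would record the dictionary between the two settings: the edges of $\widetilde{G_1}$ are exactly the comparable pairs, so $(x,y)\in E(\widetilde{G_1})$ iff $x\preceq y$ or $y\preceq x$. Since $f$ is nondecreasing, $x\preceq y$ forces $fx\preceq fy$, so $f$ preserves the edges of $G_1$ and hence, by the preceding Proposition, of $\widetilde{G_1}$; this is (K1). The assumed inequality holds whenever $x$ and $y$ are comparable, which is precisely (K2) for $\widetilde{G_1}$ with the same constants $k,l,a_1,a_2,b$. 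Thus $f$ is a Kannan $\widetilde{G_1}$-$\rho$-contraction. Finally, $C_f\ne\emptyset$ translates word for word into the existence of an $x\in X$ with $f^nx$ and $f^mx$ comparable for all $m,n\geq0$, matching the stated fixed-point criterion (with $T=f$).

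The step that needs care is checking that Property $(\ast\ast)$ supplies Property $(\ast)$ for $G_1$, so that Theorem \ref{kannan} applies. Here I would exploit the fact that $(\ast)$ is only ever invoked, in the proof of Theorem \ref{kannan}, along an orbit $\{f^nx\}$ with $x\in C_f$. For such an $x$ the points $x$ and $fx$ are comparable, and since $f$ is nondecreasing this comparability propagates by induction: if $x\preceq fx$ then $f^nx\preceq f^{n+1}x$ for every $n$, so the orbit is nondecreasing, while if $fx\preceq x$ it is nonincreasing. In the nondecreasing case, once $bf^nx\stackrel{\rho}\longrightarrow bx^*$, Property $(\ast\ast)$ with $\beta=b$ yields a subsequence with $f^{n_i}x\preceq x^*$, hence $(f^{n_i}x,x^*)\in E(\widetilde{G_1})$, which is exactly the conclusion demanded by $(\ast)$. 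The main obstacle is the nonincreasing case, where $(\ast\ast)$ as stated does not directly apply; I would handle it by passing to the dual order $\succeq$ (under which $f$ is again monotone and the orbit nondecreasing), or by observing that $(\ast)$ only requires comparability $f^{n_i}x\succeq x^*$, so a suitable one-sided monotone-convergence hypothesis is all that is genuinely used to close the argument.

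For uniqueness I would invoke the second half of Theorem \ref{kannan}. The hypothesis $k<\frac12$ carries over verbatim. It remains to verify Condition $(\star)$ for $G_1$: given $x,y\in X$, an upper bound $z$ of $\{x,y\}$ satisfies $x\preceq z$ and $y\preceq z$, so $(x,z),(y,z)\in E(\widetilde{G_1})$, and a lower bound works symmetrically. Hence ``each pair has an upper or a lower bound'' implies $(\star)$, and Theorem \ref{kannan} delivers uniqueness of the fixed point.
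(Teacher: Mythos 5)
Your reduction to Theorem \ref{kannan} via the poset graph $G_1$ is exactly the route the paper intends (the paper offers no proof beyond ``setting $G=G_1$''), and your translations of (K1), (K2), of $C_f\ne\emptyset$ into the comparability criterion, and of the uniqueness hypotheses ($k<\frac12$ plus upper/lower bounds implying Condition $(\star)$) are all correct. You have also put your finger on a point the paper passes over in silence: Property $(\ast\ast)$ is strictly weaker than Property $(\ast)$ for $G_1$, because $(\ast)$ for $G_1$ concerns sequences whose consecutive terms are comparable in either direction, while $(\ast\ast)$ speaks only of nondecreasing sequences and one-sided comparability with the limit. So the corollary is not a literal instance of Theorem \ref{kannan}; one must re-run its proof, and, as you correctly observe, $(\ast)$ is invoked there only along the orbit $\{f^nx\}$ with $x\in C_f$, which is monotone: nondecreasing if $x\preceq fx$, nonincreasing if $fx\preceq x$. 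Your handling of the nondecreasing case is complete.

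The gap is the nonincreasing case, and neither of your proposed repairs closes it from the stated hypotheses. Passing to the dual order $\succeq$ does not work: $f$ is still monotone and the orbit becomes nondecreasing for $\succeq$, but what you would then need is Property $(\ast\ast)$ for $(X,\rho,\succeq)$ --- i.e., that every (suitably scaled) $\rho$-convergent nonincreasing sequence has a subsequence lying above its limit --- and this is neither implied by nor assumed alongside $(\ast\ast)$ for $\preceq$; order-theoretic hypotheses of this kind are not self-dual. Your second suggestion (``a suitable one-sided monotone-convergence hypothesis is all that is genuinely used'') concedes precisely that an unassumed hypothesis is required. Thus, under the corollary's stated assumptions, the existence argument goes through only for those $x\in C_f$ with $x\preceq fx$; if every element of $C_f$ has a strictly decreasing orbit, the argument cannot be completed by this route. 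This defect is inherited from the paper itself: a clean statement would either assume the genuine translation of $(\ast)$ (the subsequence conclusion for sequences with comparable consecutive terms), assume $(\ast\ast)$ together with its dual, or weaken the criterion to ``there exists $x\in X$ with $x\preceq fx$ and $f^nx$ comparable to $f^mx$ for all $m,n\geq0$'' --- which still characterizes existence, since any fixed point satisfies it. As written, though, your proof (like the paper's) has a genuine hole in the nonincreasing case.
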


As another consequence of Theorem \ref{kannan}, we have the convex version of it as follows:

\begin{cor}\label{finalcor}
Let $(X,\rho)$ be a $\rho$-complete modular space endowed with a graph $G$, where $\rho$ is a convex modular, and the triple $(X,\rho,G)$ have Property $(\ast)$. Assume that $f:X\rightarrow X$ is a mapping which preserves the edges of $\widetilde G$ and satisfies
$$\rho\big(b(fx-fy)\big)\leq k\rho\big(a_1(fx-x)\big)+l\rho\big(a_2(fy-y)\big)\qquad\big(x,y\in X\ \text{and}\ (x,y)\in E(\widetilde G)\big),$$
where $k$, $l$, $a_1$, $a_2$ and $b$ are positive numbers with $b>4\max\{a_1,a_2,a_1k,a_2l\}$. Then $f$ has a fixed point if and only if $C_f\ne\emptyset$. Moreover, this fixed point is unique if $X$ satisfies Condition $(\star)$.
\end{cor}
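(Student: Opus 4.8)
The plan is to reduce the statement to Theorem~\ref{kannan} by showing that, after a suitable rescaling of the inner constants, $f$ is an honest Kannan $\widetilde G$-$\rho$-contraction. This mirrors the reduction used for the convex Banach corollary, but here the rescaling must be arranged so that the \emph{new} leading constant drops below $\tfrac12$, which is precisely what the stronger hypothesis $b>4\max\{a_1,a_2,a_1k,a_2l\}$ will buy us. Concretely, I would keep $b$ fixed and replace the inner constants by $a_1'=\tfrac b2$ and $a_2'=b$. Since $b>4a_1$ and $b>4a_2$ give $a_1\le a_1'$ and $a_2\le a_2'$, the convexity of $\rho$ (Condition (M4$'$)) yields, for every edge $(x,y)\in E(\widetilde G)$,
\[
\rho\big(a_1(fx-x)\big)\le\frac{a_1}{a_1'}\rho\big(a_1'(fx-x)\big),\qquad
\rho\big(a_2(fy-y)\big)\le\frac{a_2}{a_2'}\rho\big(a_2'(fy-y)\big),
\]
where $0$ is inserted as the complementary convex coefficient and $\rho(0)=0$.

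Substituting these two estimates into the hypothesis of the corollary gives
\[
\rho\big(b(fx-fy)\big)\le k'\rho\big(\tfrac b2(fx-x)\big)+l'\rho\big(b(fy-y)\big),\qquad k'=\frac{2a_1k}{b},\ \ l'=\frac{a_2l}{b},
\]
for all $(x,y)\in E(\widetilde G)$, which is exactly Condition (K2) for the graph $\widetilde G$ with constants $k',l'$, inner constants $a_1'=\tfrac b2$ and $a_2'=b$, and the same $b$. It then remains to verify admissibility of these constants. The requirements $a_1'\le\tfrac b2$ and $a_2'\le b$ hold by construction, and (K1) for $\widetilde G$ is assumed outright. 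The crucial point is that $b>4a_1k$ forces $k'=\tfrac{2a_1k}{b}<\tfrac12$ and $b>4a_2l$ forces $l'=\tfrac{a_2l}{b}<\tfrac12$, whence $k'+l'<1$. Thus $f$ is a Kannan $\widetilde G$-$\rho$-contraction and Theorem~\ref{kannan} immediately yields that $f$ has a fixed point if and only if $C_f\ne\emptyset$.

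For the uniqueness clause, Theorem~\ref{kannan} additionally requires that the leading constant be strictly below $\tfrac12$ together with Condition $(\star)$. The latter is assumed, and the former is already met since we arranged $k'<\tfrac12$; so uniqueness follows as well, completing the deduction.

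The only genuine subtlety — and the step I expect to be the main obstacle — is the bookkeeping on the constants: in the Banach case one needed the rescaled constant merely below $1$, whereas the uniqueness part of Theorem~\ref{kannan} demands it below $\tfrac12$. This is exactly why the hypothesis carries the factor $4$ (one factor $2$ from the admissibility restriction $a_1'\le\tfrac b2$, and a second factor $2$ to push $k'$ under $\tfrac12$) instead of the factor $2$ that sufficed in the convex Banach corollary. One should also confirm that no strict inequality is lost by taking $a_1'=\tfrac b2$ and $a_2'=b$ at the endpoints; because the hypotheses on $b$ are strict, both $k'$ and $l'$ stay strictly below $\tfrac12$, so the argument is safe.
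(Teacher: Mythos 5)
Your proposal is correct and follows essentially the same route as the paper: both use convexity of $\rho$ (with $0$ as the complementary convex term) to rescale the inner constants and reduce to Theorem~\ref{kannan}, with the factor $4$ in the hypothesis ensuring both admissibility ($a_1'\le\tfrac b2$, $a_2'\le b$) and the rescaled leading constant falling below $\tfrac12$ for uniqueness. The only cosmetic difference is that the paper replaces both inner constants by a single $a_0\in(c,\tfrac b2]$ with $c=2\max\{a_1,a_2,a_1k,a_2l\}$, whereas you take the endpoint values $a_1'=\tfrac b2$ and $a_2'=b$ explicitly; the verification is the same either way.
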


\begin{proof}
Set $c=2\max\{a_1,a_2,a_1k,a_2l\}$ and choose any $a_0\in(c,\frac{b}{2}]$. Then by the hypothesis and convexity of $\rho$, we have
\begin{eqnarray*}
\rho\big(b(fx-fy)\big)&\leq&k\rho\big(a_1(fx-x)\big)+l\rho\big(a_2(fy-y)\big)\cr\\[-2mm]
&=&k\rho\Big(\frac{a_1}{a_0} a_0\big(fx-x\big)+\big(1-\frac{a_1}{a_0}\big)0\Big)+l\rho\Big(\frac{a_2}{a_0} a_0\big(fy-y\big)+\big(1-\frac{a_2}{a_0}\big)0\Big)\cr\\[-2mm]
&\leq&\frac{a_1k}{a_0}\rho\big(a_0(fx-x)\big)+\frac{a_2l}{a_0}\rho\big(a_0(fy-y)\big)
\end{eqnarray*}
for all $x,y\in X$ with $(x,y)\in E(\widetilde G)$. Since $a_0\leq\frac{b}{2}<b$, and
$\frac{a_1k}{a_0}+\frac{a_2l}{a_0}<1$, it follows that $f$ satisfies (K2) for the graph $\widetilde G$ with the constants $k$, $l$, $a_1$ and $a_2$ replaced with $\frac{a_1k}{a_0}$, $\frac{a_2l}{a_0}$, $a_0$ and $a_0$, respectively, and $b$ kept fixed. Since $f$ preserves the edges of $\widetilde G$, it follows that $f$ is a Kannan $\widetilde G$-$\rho$-contraction and the first assertion is concluded immediately from Theorem \ref{kannan}.\par
On the other hand, since $a_0>c\geq2a_1k$, it follows that $\frac{a_1k}{a_0}<\frac12$, and because $X$ satisfies Condition $(\star)$, Theorem \ref{kannan} guarantees the uniqueness of the fixed point of $f$.
\end{proof}

\end{document}